\date{}
\theoremstyle{plain}
\newtheorem{proposition}{Proposition}[section]
\newtheorem{theorem}{Theorem}[section]
\newtheorem{definition}{Definition}[section]
\newtheorem{remark}{Remark}[section]
\newtheorem{corollary}{Corollary}[section]
\newtheorem{example}{Example}[section]
\numberwithin{equation}{section}
\begin{document}
\begin{center} {\bf Some consequences of Glaisher's map and a generalization of Sylvester's theorem}\end{center}
\begin{center}
 Darlison Nyirenda$^{1}$\,\,\,
and \,
Molatelo Rapudi$^{2}$
 \vspace{0.5cm} \\
$^{1}$  The John Knopfmacher Centre for Applicable Analysis and Number Theory, 	University of the Witwatersrand, P.O. Wits 2050, Johannesburg, South Africa.\\
$^{2}$ School of Mathematics, University of the Witwatersrand, P.O. Wits 2050, Johannesburg, South Africa.\\
e-mails: darlison.nyirenda@wits.ac.za, 942365@students.wits.ac.za\\

\end{center}
\begin{abstract}
Send to AADM, Mathematica Slovoca (Patricks article), etc...
For positive integers $k, l \geq 2$, the set of $k$-regular partitions in which parts appear at most $l$ times has attracted a lot of interest in that a composition of Glaisher's mapping can be used to prove the associated partition identities in certain cases. We consider special cases and derive some arithmetic properties. Of particular focus is the set of partitions in which parts are odd and distinct ($k =2$, $l = 2$). Sylvester proved that, for fixed weight, this set of partitions is equinumerous with the set of self-conjugate partitions.  We introduce a new class of partitions that generalizes self-conjugate partitions and as a result, we extend Sylvester's theorem. Furthermore, using this class of partitions, we give new combinatorial interpretation of some Rogers-Ramanujan identities which were previously considered by A. K. Agarwal.
\end{abstract}
\textbf{Key words}: partition; bijection; generating function; conjugate.\\
\textbf{MSC 2010}: 11P84, 05A19.   
\section{Introduction}
A partition of $n\in \mathbb{Z}_{>0}$ is sequence $\lambda = \lambda = (\lambda_1, \lambda_2, \ldots, \lambda_{\ell})$ of positive integers such that $\sum\limits_{i = 1}^{\ell} \lambda_i = n$ and $\lambda_1 \geq \lambda_{2} \geq \lambda_{3} \geq \cdots \geq \lambda_{\ell}$. The summand $\lambda_i$ is called a part of partition and the sum of summands is the weight. The weight of $\lambda$ is denoted by $\vert \lambda \vert$.  There is so much study of restricted partition functions, for example, see \cite{amn,andrews, munagi, nyirenda1, DN2}. In \cite{aland}, G. Andrews recalls a theorem of Schur, and also reports a hint from K. Alladi. With this hint, the following partition identity is established. 
\begin{theorem}\label{main}
The number of partitions of $n$ into distinct parts not divisible by 3 is equal to the number of partitions of $n$ into odd parts occuring not more than twice.  
\end{theorem}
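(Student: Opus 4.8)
The plan is to follow the hint attributed to Alladi and realize the correspondence as a composition of two Glaisher-type maps, exploiting the fact that on the left-hand class the repetition bound is $2$ and the forbidden divisor is $3$, whereas on the right-hand class these roles are interchanged (repetition bound $3$, forbidden divisor $2$). Throughout, recall that Glaisher's map $\phi_k$ is the weight-preserving bijection sending a partition into parts not divisible by $k$ to a partition in which no part is repeated $k$ or more times: if the part $a$ (with $k \nmid a$) occurs $m_a$ times, one expands $m_a = \sum_j c_{a,j} k^j$ in base $k$ and outputs the part $a k^j$ with multiplicity $c_{a,j}$, while the inverse $\phi_k^{-1}$ recovers the base-$k$ digits from the multiplicities of $a, ak, ak^2, \dots$. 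Since $a \sum_j c_{a,j} k^j = a\, m_a$, the weight is preserved in both directions.

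First I would introduce the intermediate class $C$ of partitions into odd parts, none divisible by $3$, with multiplicities unrestricted, and show that $\phi_2^{-1}$ gives a bijection from the left-hand class of Theorem \ref{main} onto $C$. Indeed, a partition into distinct parts is precisely one in which every part occurs fewer than $2$ times, so $\phi_2^{-1}$ applies and lands among partitions into odd parts. The key point is that writing a part as $v = 2^j a$ with $a$ odd, one has $3 \mid v$ if and only if $3 \mid a$, because $\gcd(2,3)=1$; hence the restriction "no part divisible by $3$" on the distinct parts transfers exactly to the odd cores $a$, and conversely. This yields a weight-preserving bijection between partitions into distinct parts not divisible by $3$ and the class $C$.

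Next I would apply $\phi_3$ to $C$. An odd part not divisible by $3$ has the form $a$ with $3 \nmid a$, and its images $a\, 3^j$ under $\phi_3$ are again odd (a product of odd numbers), each occurring at most twice; moreover every odd number is uniquely $a\,3^j$ with $3 \nmid a$. Thus $\phi_3$ restricts to a weight-preserving bijection from $C$ onto the partitions into odd parts each occurring at most twice, i.e.\ the right-hand class of Theorem \ref{main}. Composing the two steps, $\phi_3 \circ \phi_2^{-1}$ is the desired bijection.

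The main obstacle, and really the only nonroutine point, is checking that each Glaisher map preserves the auxiliary congruence constraint: non-divisibility by $3$ in the first step and oddness in the second. Both reductions rest on the coprimality of $2$ and $3$, which guarantees that multiplying or dividing by powers of one of them does not affect divisibility by the other; once this is verified the bijectivity and weight-preservation are inherited from $\phi_2$ and $\phi_3$. As a consistency check, one may confirm the identity at the level of generating functions: using $1 + q^n + q^{2n} = (1-q^{3n})/(1-q^n)$ on the right-hand side and $1 + q^n = (1-q^{2n})/(1-q^n)$ on the left-hand side, both products reduce after cancellation to $\prod_{n \equiv 1,5 \pmod{6}}(1-q^n)^{-1}$.
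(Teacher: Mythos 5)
Your proof is correct and takes essentially the paper's own route: the paper obtains Theorem \ref{main} as the case $(p,k)=(3,2)$ of the identity $b(n,p,k)=c(n,k,p)$, noting that for $\gcd(p,k)=1$ the bijection is a composition of Glaisher maps --- precisely the map $\phi_3\circ\phi_2^{-1}$ you construct. Your write-up merely makes explicit the detail the paper delegates to \cite{keith}, namely that coprimality of $2$ and $3$ lets each Glaisher map carry the auxiliary divisibility condition along.
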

Actually, it turns out that there is a more general theorem beyond Theorem \ref{main} that can be viewed as a consequence of Glaisher's theorem \cite{glaisher}.  More precisely, for integers $p, k \geq 2$, let $b(n,p,k)$ denote the number of partitions of $n$ into parts not divisible by $p$ and occurring at most $k - 1$ times and let $c(n,k,p)$ denote the number of partitions of $n$ into parts not divisible by $k$ and occurring at most $p-1$ times. It follows that 
 \begin{equation}\label{theomain}
 b(n,p,k) = c(n,k,p).
\end{equation}
This observation was stated in \cite{keith} and attempts to provide a bijective proof that is purely a composition of Glaisher mappings have been made. For the case where $\gcd(k,p)= 1$, one can compose Glaisher maps to establish the bijection. A bijection  for the general case has recently been given by the same author in \cite{keith2}. We shall denote by $\phi_{k,p}$ a bijection from $b(n,p,k)$-partitions onto $c(n,k,p)$-partitions.  In Section 2, we  study a certain class of partitions involving $b(n,pt,p)$-subpartitions and the special case $b(n,3,2)$. In Section 3, we look at $b(n,2,2)$ and introduce a new class of partitions, which we call symmetric partitions.  We generalize Sylvester's theorem for $b(n,2,2)$ via symmetric partitions and consequently, present new interpretations of some Rodgers-Ramanujan identities.\\\\
\noindent We shall denote by $B(n,p,k)$ the set of partitions enumerated by $b(n,p,k)$. Similarly, the same applies to $C(n,k,p)$. For the number of partitions of $n$ into parts appearing at most $k - 1$ times, we use the notation $r(n,k)$, and the underlying set of partitions is denoted by $R(n,k)$,
We also recall the  following notation; for any $a, q\in\mathbb{C}$, we have
$ (a;q)_{n} = \prod\limits_{i = 0}^{n - 1}(1 - aq^{i})$ for $n\geq 1$ and  $(a;q)_{0} = 1$, and also
for $\vert q\vert  < 1$, $(a;q)_{\infty} = \prod\limits_{i = 0}^{\infty}(1 - aq^{i})$. \\
The following $q$-identities will be useful:
\begin{equation}\label{q1}
\sum_{n=0}^{\infty} \frac{t^n q^{\frac{n(n-1)}{2}}}{(q ,q)_n} = \prod_{n=0}^{\infty}(1 + tq^n)\,\,\text{where}\,\,\vert t \vert,\vert q\vert < 1,
\end{equation}
\begin{equation}\label{J}
\sum\limits_{n = -\infty}^{\infty}w^{n}q^{\frac{n(n+1)}{2}} = \prod\limits_{n = 1}^{\infty}(1 - q^{n})(1 + wq^{n})(1 + w^{-1}q^{n-1}),
\end{equation}
\begin{equation}\label{pent}
1 + \sum\limits_{n = 1}^{\infty}(-1)^{n}q^{\frac{n(3n \pm 1)}{2}} = \prod\limits_{n = 1}^{\infty}(1 - q^{n}),
\end{equation}
\begin{equation}\label{cauchy}
\prod\limits_{n=1}^{\infty}\frac{1-q^n}{1+q^n}\, = \, \sum\limits_{n = - \infty}^{\infty}(-1)^{n}q^{n^2}.
\end{equation}
For these $q$-identities, one may refer to \cite{andrews}.
\section{Certain partitions involving $b(n,pt,p)$-subpartitions}
\noindent Consider partitions in $B(n,pt,p)$. We investigate the case where we allow parts divisible by $p$ appear with certain multiplicities. \\
\noindent Let $t \geq 1$ and $p\geq 2$ be integers and denote by $F(n,p,t)$ the set of partitions of $n$ in which parts that are congruent to $0 \pmod{p}$ occur with multiplicities $jt$, $j=1,2,3,\ldots,p - 1$ and parts not divisible by $p$ occur at most $pt-1$ times, and let $f(n,p,t) = \vert F(n,p,t)\vert $. 
\begin{proposition}\label{000}
For all $n\geq 0$, $f(n,p,t) = r(n,p).$
\end{proposition}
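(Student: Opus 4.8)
The plan is to prove the identity at the level of generating functions and let a telescoping cancellation do the work. Writing $P(a)=\prod_{m\ge 1}(1-q^{am})$, I would first record the generating function for $F(n,p,t)$ by separating the two kinds of parts. A part $m$ with $p\nmid m$ may occur $0,1,\dots,pt-1$ times, contributing the factor $\frac{1-q^{ptm}}{1-q^{m}}$; a part $m$ with $p\mid m$ may occur with multiplicity $0$ or $jt$ for $j=1,\dots,p-1$, contributing $1+q^{tm}+\dots+q^{(p-1)tm}=\frac{1-q^{ptm}}{1-q^{tm}}$. Hence
\[
\sum_{n\ge 0} f(n,p,t)\,q^{n}=\prod_{p\nmid m}\frac{1-q^{ptm}}{1-q^{m}}\cdot\prod_{p\mid m}\frac{1-q^{ptm}}{1-q^{tm}}.
\]

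Next I would convert every product over the residue class $p\nmid m$ into an unrestricted product using $\prod_{p\nmid m}(1-q^{am})=P(a)/P(ap)$, and rewrite each product over $p\mid m$ by the substitution $m=p\ell$. This turns the numerator into $\frac{P(pt)}{P(p^{2}t)}\cdot P(p^{2}t)=P(pt)$ and the denominator into $\frac{P(1)}{P(p)}\cdot P(pt)$, so the whole expression collapses to $P(p)/P(1)=\prod_{m\ge 1}\frac{1-q^{pm}}{1-q^{m}}$. This is precisely the generating function for $r(n,p)$, since in an $R(n,p)$-partition each part occurs at most $p-1$ times and contributes $\frac{1-q^{pm}}{1-q^{m}}$. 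Comparing coefficients of $q^{n}$ then yields $f(n,p,t)=r(n,p)$.

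The computation is short, so the only real care is bookkeeping: tracking which products are restricted to $p\nmid m$ versus taken over all $m$, and verifying that the substitution $m=p\ell$ lines the factors $P(p^{2}t)$ and $P(pt)$ up for exact cancellation. The mild subtlety I expect is confirming that the admissible multiplicities $0,t,2t,\dots,(p-1)t$ for parts divisible by $p$ produce exactly $\frac{1-q^{ptm}}{1-q^{tm}}$, since the inclusion of the empty multiplicity is what makes the telescoping clean. If a bijective argument is preferred, one could instead split each $F(n,p,t)$-partition into its sub-partition of parts prime to $p$ and its sub-partition of parts divisible by $p$ and route each piece through the appropriate Glaisher-type correspondence; but the generating function route above is the most direct, and I would present it first.
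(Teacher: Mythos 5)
Your proposal is correct and follows essentially the same route as the paper: both compute the generating function for $f(n,p,t)$ by splitting parts according to divisibility by $p$, reduce each factor to a ratio of the form $\frac{1-q^{ptm}}{1-q^{m}}$ or $\frac{1-q^{ptm}}{1-q^{tm}}$, and telescope everything down to $\prod_{m\geq 1}\frac{1-q^{pm}}{1-q^{m}}$, the generating function for $r(n,p)$. Your $P(a)$ bookkeeping is just a tidier packaging of the identical cancellation the paper carries out term by term.
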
 
\begin{proof}
\begin{align*}
\sum\limits_{n=0}^{\infty}f(n,p,t)q^n & = \quad \prod_{n=1}^{\infty}(1 + q^{tpn} + q^{2tpn}+ q^{3tpn}+ \cdots + q^{(p-1)tpn})\\
& \quad \quad \times \prod\limits_{r = 1}^{p-1}\prod\limits_{n = 1}^{\infty}(1 + q^{pn - r} + q^{2(pn - r)} + \cdots + q^{(pt-1)(pn - r)})\end{align*}
\begin{align*}
& = \quad \prod_{n=1}^{\infty}\frac{1-q^{p^2tn}}{1-q^{ptn}} \prod\limits_{r = 1}^{p-1}\prod\limits_{n = 1}^{\infty}\frac{1-q^{pt(pn-r)}}{1-q^{pn-r}}\\
& = \quad \prod_{n=1}^{\infty}\frac{(1-q^{p^2tn})}{(1-q^{ptn})}\cdot \frac{(1-q^{pn})}{(1-q^n)}\cdot \frac{(1-q^{ptn})}{(1-q^{p^2tn})}\\
& = \quad \prod_{n=1}^{\infty}\frac{1-q^{pn}}{1-q^n} \\
& = \quad \prod_{n=1}^{\infty} (1 + q^{n} + q^{2n} + q^{3n}+ \cdots + q^{(p-1)n}).
\end{align*}
\end{proof}
We exhibit a bijection from between the sets $F(n,p,t)$  and $R(n,p)$.\\  

\noindent Let $\lambda \in F(n,p,t)$. Decompose $\lambda$ as $\lambda = (\lambda_{p},\lambda_{r})$ where $\lambda_{r}$ is the subpartition of $\lambda$ consisting of all parts not divisible by $p$ and $\lambda_p$ is that subpartition of $\lambda$ with parts divisible by $p$.\\

\noindent Note that $$\lambda_p = (\lambda_1^{j_1t}, \lambda_2^{j_2t}, \lambda_3^{j_3t},\ldots, \lambda_l^{j_lt})$$ where $\lambda_i \equiv 0\pmod{p}$, $j_i = 1,2,\ldots, p - 1$ for all $i$ and $\lambda_1 > \lambda_2 > \lambda_3 > \ldots >\lambda_l$. Then the map
$$ \lambda \mapsto \mu = \phi_{p, pt}(\lambda_{r}) \cup \left((t\lambda_1)^{j_1}, (t\lambda_2)^{j_2}, (t\lambda_3)^{j_3}, \ldots, (t\lambda_l)^{j_l}\right)$$
 is a bijection.\\
\noindent To invert the mapping above, let $\mu \in R(n,p)$. Decompose $\mu$ as $\mu = (\mu_1,\mu_2)$ where $\mu_1$ is the subpartition of $\mu$ consisting of parts congruent to $0 \pmod{pt}$ and $\mu_2$ is the subpartition of $\mu$ with parts not divisible by $pt$ .\\
\noindent Observe that, $\mu_1$ can be written as
$$\mu_1=(\alpha_1^{m_1},\alpha_2^{m_2},\alpha_3^{m_3}, \ldots, \alpha_s^{m_s})$$ for some $\alpha_{i} \equiv 0 \pmod{pt}$  where  $\alpha_1 > \alpha_2 > \alpha_3 > \cdots > \alpha_s \geq 1 $ with $0\leq m_{i} \leq p - 1$. Then the map
$$\mu \mapsto \beta = \phi_{p, pt}^{-1}(\mu_2) \cup \left( \bigcup\limits_{i=1}^{s}\left(\frac{\alpha_i}{t}\right)^{m_it} \right)$$
is the inverse.

\noindent By the proposition above, we have
$$r(n,p) = f(n,p,1) = f(n,p,2) = f(n,p,3) = \cdots $$ so that
\begin{equation}\label{rr1}
 \sum_{t = 1}^{\ell}f(n,p,t)  = \ell r(n,p).
\end{equation}
Thus  from \eqref{rr1}, it is clear that for any $\ell \geq 2$, 
\begin{equation}
\sum_{t = 1}^{\ell}f(n,p,t) \equiv 0 \pmod{\ell}.
\end{equation}
\noindent In reference to Proposition \ref{000}, it is clear that $f(n,2,t)$ ($p = 2$) is equal to the number of partitions of $n$ into distinct parts. Let $f_{e}(n,t)$ (resp. $f_{o}(n,t)$) denote the number of $f(n,2,t)$-partitions of $n$ with an even (resp. odd) number of \textit{different} even parts. We have the following formula for $f_{e}(n,2,t)$ in the corollary below.
\begin{corollary}\label{000isaa}
For $n \geq 0$, we have
\begin{equation}\label{fe2,1}
f_{o}(n,2,t) =  \sum\limits_{j = 1}^{\lfloor \sqrt{\frac{n}{2t}}\rfloor}(-1)^{j+1}d(n-2tj^2),
\end{equation}
where $d(n)$ is the number of partitions of $n$ into distinct parts and $d(0):= 1$.
\end{corollary}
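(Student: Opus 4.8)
The plan is to track the number of distinct even parts with an auxiliary variable $z$ and read off the two specializations $z=\pm 1$. In a partition counted by $f(n,2,t)$ the multiplicity index $j$ of Proposition \ref{000} is forced to equal $1$ when $p=2$, so each even value that appears must occur exactly $t$ times, contributing weight $2mt$ for the value $2m$, while each odd part occurs at most $2t-1$ times. Hence I would set
\[
G(z,q)=\prod_{m=1}^{\infty}\bigl(1+zq^{2mt}\bigr)\prod_{m=1}^{\infty}\frac{1-q^{2t(2m-1)}}{1-q^{2m-1}},
\]
where the $z$ marks each distinct even value present. Then $z=1$ reproduces the computation of Proposition \ref{000} for $p=2$ and gives $f_{e}(n,2,t)+f_{o}(n,2,t)=d(n)$, while $z=-1$ gives the signed count $\sum_{n}\bigl(f_{e}(n,2,t)-f_{o}(n,2,t)\bigr)q^{n}=G(-1,q)$.

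The core of the argument is to put $G(-1,q)$ in closed form. Converting the finite geometric factors and the even-part product to infinite products, and using that the odd multiples of $2t$ satisfy $\prod_{m\ge 1}(1-q^{2t(2m-1)})=(q^{2t};q^{2t})_{\infty}/(q^{4t};q^{4t})_{\infty}$ together with $\prod_{m\ge 1}(1-q^{2m-1})=(q;q)_{\infty}/(q^{2};q^{2})_{\infty}$, I would obtain
\[
G(-1,q)=\frac{(q^{2t};q^{2t})_{\infty}^{2}}{(q^{4t};q^{4t})_{\infty}}\cdot\frac{(q^{2};q^{2})_{\infty}}{(q;q)_{\infty}}
=\prod_{n=1}^{\infty}\frac{1-q^{2tn}}{1+q^{2tn}}\cdot\prod_{k=1}^{\infty}(1+q^{k}).
\]
Applying the Cauchy identity \eqref{cauchy} with $q$ replaced by $q^{2t}$ evaluates the first product as $\sum_{j=-\infty}^{\infty}(-1)^{j}q^{2tj^{2}}$, and since $\prod_{k\ge 1}(1+q^{k})=\sum_{n\ge 0}d(n)q^{n}$ we arrive at
\[
G(-1,q)=\Biggl(\sum_{j=-\infty}^{\infty}(-1)^{j}q^{2tj^{2}}\Biggr)\sum_{n=0}^{\infty}d(n)q^{n}.
\]

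Extracting the coefficient of $q^{n}$ then yields $f_{e}(n,2,t)-f_{o}(n,2,t)=\sum_{j=-\infty}^{\infty}(-1)^{j}d(n-2tj^{2})$. Folding the bilateral sum about $j=0$ via $(-1)^{j}=(-1)^{-j}$ gives $\sum_{j\in\mathbb{Z}}(-1)^{j}d(n-2tj^{2})=d(n)+2\sum_{j\ge 1}(-1)^{j}d(n-2tj^{2})$, and combining this with $f_{e}(n,2,t)+f_{o}(n,2,t)=d(n)$ leaves $2f_{o}(n,2,t)=-2\sum_{j\ge 1}(-1)^{j}d(n-2tj^{2})$, which is \eqref{fe2,1} once one notes $d(n-2tj^{2})=0$ for $2tj^{2}>n$, so the sum terminates at $j=\lfloor\sqrt{n/(2t)}\rfloor$ (with $d(0)=1$). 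I expect the only delicate point to be the product bookkeeping in the middle step, namely correctly separating the multiples of $2t$ into odd and even classes so as to recognize the left-hand side of \eqref{cauchy}; the coefficient extraction and the bilateral-to-unilateral folding are then routine.
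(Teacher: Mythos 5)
Your proposal is correct and takes essentially the same route as the paper: both compute the signed generating function $\sum_{n\ge 0}(f_e(n,2,t)-f_o(n,2,t))q^n$, factor it as $\prod_{n\ge 1}\frac{1-q^{2tn}}{1+q^{2tn}}\cdot\sum_{n\ge 0}d(n)q^n$, apply the Cauchy identity \eqref{cauchy} with $q\mapsto q^{2t}$, and then eliminate $f_e$ via $f_e(n,2,t)+f_o(n,2,t)=d(n)$ and fold the bilateral sum to isolate $f_o$. The only cosmetic difference is that the paper reaches the factorization by inserting $\prod_{n\ge1}(1+q^{2tn})$ and invoking Proposition \ref{000}, while you get there by Pochhammer-symbol bookkeeping; the substance is identical.
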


\begin{proof}
\begin{align*}
& \sum\limits_{n=0}^{\infty}(f_{e}(n,2,t) - f_{o}(n,2,t))q^n\\
& = \prod_{n=1}^{\infty}(1 - q^{2tn})\prod\limits_{n = 1}^{\infty}(1 + q^{2n - 1} + q^{2(2n - 1)} + \cdots + q^{(2t-1)(2n - 1)})\\
& = \prod_{n=1}^{\infty}\frac{1-q^{2tn}}{1+q^{2tn}}\prod_{n=1}^{\infty}(1+q^{2tn})(1 + q^{2n - 1} + q^{2(2n - 1)} + \cdots + q^{(2t-1)(2n - 1)})\\
& = \prod_{n=1}^{\infty}\frac{1-q^{2tn}}{1+q^{2tn}}\sum_{n=0}^{\infty}d(n)q^n\\
& = \sum_{n=-\infty}^{\infty}(-1)^nq^{2tn^2}\sum_{n=0}^{\infty}d(n)q^n \,\,\, \text{(by \eqref{cauchy})}.
\end{align*}
\noindent Thus we have
\begin{align*}
& \sum\limits_{n=0}^{\infty}(f_e(n,2,t) + f_o(n,2,t))q^n  \\
& = \left(1+2\sum_{n=1}^{\infty}(-1)^nq^{2tn^2}\right) \sum_{n=0}^{\infty}d(n)q^n + 2\sum\limits_{n=0}^{\infty}f_o(n,2,t))q^n \\
                                                        & = \sum_{n=0}^{\infty}d(n)q^n + 2\sum_{n=1}^{\infty}(-1)^nq^{2tn^2}\cdot \sum_{n=0}^{\infty}d(n)q^n + 2\sum\limits_{n=0}^{\infty}f_o(n,2,t))q^n,
                                                        \end{align*} 
i.e.
$$
\sum_{n=0}^{\infty}d(n)q^n = \sum_{n=0}^{\infty}d(n)q^n + 2\sum_{n=0}^{\infty}\sum_{k=1}^{n}a_k d(n-k)q^n + 2\sum\limits_{n=0}^{\infty}f_{o}(n,2,t))q^n
$$
where
$$
a_k = \begin{cases}
(-1)^j, & k = 2tj^2, j \geq 1;\\
0 , & \text{otherwise}.
\end{cases}
$$
Hence, $$ \sum\limits_{n=0}^{\infty}f_o(n,2,t))q^n  = \sum_{n=0}^{\infty} \left( \sum_{j=1}^{\lfloor \sqrt{\frac{n}{2t}}\rfloor}(-1)^jd(n-2tj^2)\right) q^n. $$
\noindent By comparing the coefficients of $q^n$ on both sides, we obtain the result.
\end{proof}
Let $p = 3$ and $k = 2$ in Theorem \ref{theomain}. We then get a partition function $b(n,3,2)$, the number of 3-regular partitions of $n$ into distinct parts. Further, let $c(n,i)$ to be the number of partitions of $n$ in which parts are not congruent to $0, \pm i \pmod{9}$. Theorem \ref{rela} shows how $c(n,i)$ and $b(n,3,2)$ are related in special cases. 
\begin{theorem}\label{rela}
We have:
\begin{align*}
\sum\limits_{n = 0}^{\infty}b(n,3,2)q^{n} & =  \frac{(q^{27};q^{27})_{\infty}(-q^{15};q^{27})_{\infty}(-q^{12};q^{27})_{\infty}}{ (q^3; q^{3})_{\infty} } \\
& \,\,\,\,\,\,\,\,\,\,\,\,\,\,\,\, + q\frac{(1 + q^{6})(q^{27};q^{27})_{\infty} (-q^{21};q^{27})_{\infty}(-q^{33};q^{27})_{\infty}}{(q^3; q^{3})_{\infty}}\\
&\,\,\,\,\,\,\,\,\,\,\,\,\,\,\,\,\,\,\,\,\,\,\,\,\,\,\,\,\,\,\, + q^{2}\frac{(q^{27};q^{27})_{\infty}(-q^{3};q^{27})_{\infty} (-q^{24};q^{27})_{\infty}}{(q^3; q^{3})_{\infty}}.
\end{align*}
Consequently, for all $n\geq0$,
$$ b(3n,3,2) \equiv c(n,4) \pmod{2}$$
and
$$b(3n + 2,3,2) \equiv c(n,1) \pmod{2}.$$
\end{theorem}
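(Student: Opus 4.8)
The plan is to obtain a closed theta form for $\sum_{n\ge 0}b(n,3,2)q^{n}$ and then read the three summands off a $3$-dissection. Since $b(n,3,2)$ counts partitions into distinct parts not divisible by $3$,
$$\sum_{n\ge 0}b(n,3,2)q^{n}=\prod_{n\ge 1}(1+q^{3n-1})(1+q^{3n-2}).$$
First I would apply the Jacobi triple product \eqref{J} with base $q^{3}$ and $w=q^{-1}$, for which $(1+wq^{3n})=(1+q^{3n-1})$ and $(1+w^{-1}q^{3(n-1)})=(1+q^{3n-2})$; this yields
$$\prod_{n\ge 1}(1-q^{3n})(1+q^{3n-1})(1+q^{3n-2})=\sum_{n=-\infty}^{\infty}q^{n(3n+1)/2},$$
and hence
$$\sum_{n\ge 0}b(n,3,2)q^{n}=\frac{1}{(q^{3};q^{3})_{\infty}}\sum_{n=-\infty}^{\infty}q^{n(3n+1)/2}.$$

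Next I would $3$-dissect the theta series by the residue of $n$ modulo $3$. Writing $n=3m$, $n=3m+2$, and $n=3m+1$ sends the exponent $n(3n+1)/2$ into the residue classes $0$, $1$, and $2$ modulo $3$ respectively, and in each class the resulting sum over $m$ is again a theta series that I would resum with \eqref{J} in base $q^{27}$: the choices $w=q^{-12}$, $w=q^{-6}$, and $w=q^{-24}$ produce the products $(q^{27};q^{27})_{\infty}(-q^{15};q^{27})_{\infty}(-q^{12};q^{27})_{\infty}$, $(q^{27};q^{27})_{\infty}(-q^{21};q^{27})_{\infty}(-q^{6};q^{27})_{\infty}$, and $(q^{27};q^{27})_{\infty}(-q^{3};q^{27})_{\infty}(-q^{24};q^{27})_{\infty}$. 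Dividing by $(q^{3};q^{3})_{\infty}$ and inserting the prefactors $1,q,q^{2}$ gives the claimed identity, after the cosmetic rewriting $(-q^{6};q^{27})_{\infty}=(1+q^{6})(-q^{33};q^{27})_{\infty}$ in the middle term.

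For the congruences, note that the three summands are, respectively, a power series in $q^{3}$, $q$ times a power series in $q^{3}$, and $q^{2}$ times a power series in $q^{3}$. Extracting the parts supported on exponents $\equiv 0$ and $\equiv 2 \pmod{3}$ and replacing $q^{3}$ by $q$ gives
$$\sum_{n\ge 0}b(3n,3,2)q^{n}=\frac{(q^{9};q^{9})_{\infty}(-q^{5};q^{9})_{\infty}(-q^{4};q^{9})_{\infty}}{(q;q)_{\infty}},$$
$$\sum_{n\ge 0}b(3n+2,3,2)q^{n}=\frac{(q^{9};q^{9})_{\infty}(-q;q^{9})_{\infty}(-q^{8};q^{9})_{\infty}}{(q;q)_{\infty}}.$$
Then I would reduce modulo $2$ using $(-q^{a};q^{b})_{\infty}\equiv(q^{a};q^{b})_{\infty}\pmod{2}$ together with the factorization $(q;q)_{\infty}=(q^{9};q^{9})_{\infty}\prod_{r=1}^{8}(q^{r};q^{9})_{\infty}$. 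The factors $(q^{9};q^{9})_{\infty}$ cancel and, after cancelling $(q^{4};q^{9})_{\infty}(q^{5};q^{9})_{\infty}$ (resp. $(q;q^{9})_{\infty}(q^{8};q^{9})_{\infty}$) against the corresponding factors in the denominator, one is left with $\prod (q^{r};q^{9})_{\infty}^{-1}$ over $r\not\equiv 0,\pm 4$ (resp. $r\not\equiv 0,\pm 1$) modulo $9$, which are precisely $\sum_{n}c(n,4)q^{n}$ and $\sum_{n}c(n,1)q^{n}$. Comparing coefficients modulo $2$ then gives the two congruences.

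The two $q$-series reductions in the last step are routine; the main obstacle is the middle (dissection) step, where one must track the reindexing carefully to match each residue class with the correct theta product and to verify that the $w=q^{-6}$ sum indeed produces $(-q^{6};q^{27})_{\infty}$, whose splitting off of the factor $1+q^{6}$ accounts for the asymmetric shape of the second summand.
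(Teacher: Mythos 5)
Your proposal is correct and follows essentially the same route as the paper: Jacobi triple product with base $q^3$, $w=q^{-1}$, a $3$-dissection of the resulting theta series by $n \bmod 3$, a second application of \eqref{J} in base $q^{27}$, and finally the reduction $(-q^a;q^b)_\infty \equiv (q^a;q^b)_\infty \pmod 2$ against the factorization of $(q;q)_\infty$ into residue classes mod $9$. The only differences are cosmetic: you take $w=q^{-6}$ (after reindexing $m\mapsto -m-1$) in the middle class and split off $(1+q^6)$ from $(-q^6;q^{27})_\infty$, whereas the paper takes $w=q^6$ and simplifies $q^7(1+q^{-6})$ to $q(1+q^6)$ — equivalent computations yielding the same dissection.
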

\begin{proof}
 Clearly, 
$$\sum\limits_{n  = 0}^{\infty}b(n,3,2)q^{n} = \prod\limits_{n = 1}^{\infty}(1 + q^{3n - 1})(1 + q^{3n - 2}).$$ which implies
 $$\sum\limits_{n  = 0}^{\infty}b(n,3,2)q^{n} = \frac{1}{\prod\limits_{n = 1}^{\infty}(1 - q^{3n})}\prod\limits_{n = 1}^{\infty}(1- q^{3n})(1 + q^{3n - 1})(1 + q^{3n - 2})$$
 so that
$$\sum\limits_{n  = 0}^{\infty}b(n,3,2)q^{n} = \frac{1}{(q^3; q^{3})_{\infty}}\sum\limits_{ n = -\infty}^{\infty}q^{\frac{3n(n + 1)}{2} - n}.$$
\noindent This follows by setting $q:= q^{3}$ and $w: = q^{-1}$ in \eqref{J}.
Thus
\begin{align*}
& \sum\limits_{n  = 0}^{\infty}b(n,3,2)q^{n} \\
& =  \frac{1}{(q^3; q^{3})_{\infty}}\sum\limits_{ n = -\infty}^{\infty}q^{\frac{3n^2 + n}{2}} \\
                                                            & =  \frac{1}{(q^3; q^{3})_{\infty}}\left( \sum\limits_{ n = -\infty}^{\infty}q^{\frac{3(9n^2) + 3n}{2}} + \sum\limits_{ n = -\infty}^{\infty}q^{\frac{3(3n +2)^2 + 3n + 2}{2}} + \sum\limits_{ n = -\infty}^{\infty}q^{\frac{3(3n +1)^2 + 3n + 1}{2}}\right)\\
& =  \frac{1}{(q^3; q^{3})_{\infty}}\left( \sum\limits_{ n = -\infty}^{\infty}q^{\frac{27n^2 + 3n}{2}} + \sum\limits_{ n = -\infty}^{\infty}q^{\frac{27n^2 + 39n + 14}{2}} + \sum\limits_{ n = -\infty}^{\infty}q^{\frac{27n^2 + 21n + 4}{2}}\right)\\
& = \frac{1}{(q^3; q^{3})_{\infty}}\left( \sum\limits_{ n = -\infty}^{\infty}q^{\frac{27n^2 + 3n}{2}} + q\sum\limits_{ n = -\infty}^{\infty}q^{\frac{27n^2 + 39n}{2} + 6} + q^{2}\sum\limits_{ n = -\infty}^{\infty}q^{\frac{27n^2 + 21n}{2}}\right)\end{align*}
\begin{align*}
& = \frac{1}{(q^3; q^{3})_{\infty}}\left( (q^{27};q^{27})_{\infty}(-q^{15};q^{27})_{\infty}(-q^{12};q^{27})_{\infty}\right.  \\ 
 & \left. \quad \quad \quad \,\,\,+ q^{7}(1 + q^{-6})(q^{27};q^{27})_{\infty} (-q^{21};q^{27})_{\infty}(-q^{33};q^{27})_{\infty} \right.\\
&\left.  \quad \quad \quad \,\,\,+ q^{2}(q^{27};q^{27})_{\infty}(-q^{3};q^{27})_{\infty} (-q^{24};q^{27})_{\infty} \right)\\
& = \frac{1}{(q^3; q^{3})_{\infty}}\left( (q^{27};q^{27})_{\infty}(-q^{15};q^{27})_{\infty}(-q^{12};q^{27})_{\infty}\right.  \\ 
 & \left. \quad \quad \quad \,\,\,+ q(1 + q^{6})(q^{27};q^{27})_{\infty} (-q^{21};q^{27})_{\infty}(-q^{33};q^{27})_{\infty} \right.\\
&\left.  \quad \quad \quad \,\,\,+ q^{2}(q^{27};q^{27})_{\infty}(-q^{3};q^{27})_{\infty} (-q^{24};q^{27})_{\infty} \right)
\end{align*}
which follows by invoking \eqref{J} and a bit of algebra. Thus, we have the dissection:
\begin{align*}
\sum\limits_{n = 0}^{\infty}b(n,3,2)q^{n} & =  \frac{(q^{27};q^{27})_{\infty}(-q^{15};q^{27})_{\infty}(-q^{12};q^{27})_{\infty}}{ (q^3; q^{3})_{\infty} }\\
& \,\,\,\,\,\,\,\,\,\,\,\,\,\,\,\, + q\frac{(1 + q^{6})(q^{27};q^{27})_{\infty} (-q^{21};q^{27})_{\infty}(-q^{33};q^{27})_{\infty}}{(q^3; q^{3})_{\infty}}\\
&\,\,\,\,\,\,\,\,\,\,\,\,\,\,\,\,\,\,\,\,\,\,\,\,\,\,\,\,\,\,\, + q^{2}\frac{(q^{27};q^{27})_{\infty}(-q^{3};q^{27})_{\infty} (-q^{24};q^{27})_{\infty}}{(q^3; q^{3})_{\infty}}
\end{align*}
From the 3-dissection above, it follows that
\begin{equation}\label{diss1}
\sum\limits_{n = 0}^{\infty}b(3n,3,2)q^{3n}  =  \frac{(q^{27};q^{27})_{\infty}(-q^{15};q^{27})_{\infty}(-q^{12};q^{27})_{\infty}}{ (q^3; q^{3})_{\infty} }
\end{equation}
and
\begin{equation}\label{diss3}
\sum\limits_{n = 0}^{\infty}b(3n + 2,3,2)q^{3n + 2}  = q^{2}\frac{(q^{27};q^{27})_{\infty}(-q^{3};q^{27})_{\infty} (-q^{24};q^{27})_{\infty}}{(q^3; q^{3})_{\infty}}.  
\end{equation}
 Using \eqref{diss1},
\begin{align*}
\sum\limits_{n = 0}^{\infty}b(3n,3,2)q^{n}  & =  \frac{(q^{9};q^{9})_{\infty}(-q^{5};q^{9})_{\infty}(-q^{4};q^{9})_{\infty}}{ (q; q)_{\infty} }  \\
                                                               & \equiv \frac{(q^{9};q^{9})_{\infty}(q^{5};q^{9})_{\infty}(q^{4};q^{9})_{\infty}}{ (q; q)_{\infty} }  \pmod{2}
\end{align*}
and the first parity identity in the theorem follows. Similarly, the other one arises from \eqref{diss3}.
\end{proof}
If we let $k = p = 2$ in Theorem \ref{theomain}, we get $b(n,2,2)$ which enumerates partitions into distinct odd parts. Actually, J. J. Sylvester proved the following result, which we recall here:
\begin{theorem}[Sylvester, \cite{av}]\label{self}
For all $n\geq 0$, $b(n,2,2)$ is equal to the number of self-conjugate partitions.
\end{theorem}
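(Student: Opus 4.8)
The plan is to establish Sylvester's theorem by a weight-preserving bijection between the set $B(n,2,2)$ of partitions of $n$ into distinct odd parts and the set of self-conjugate partitions of $n$, realised through the principal hook decomposition of a Young diagram. As a preliminary sanity check I would first record the generating function
$$\sum_{n=0}^{\infty} b(n,2,2)\, q^{n} = \prod_{n=1}^{\infty}(1 + q^{2n-1}),$$
which follows at once from the definition, since each odd part $2n-1$ is either absent or present exactly once; thus it suffices to exhibit a bijection and verify that the set of self-conjugate partitions carries the same generating function.

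First I would set up the forward map. Given a self-conjugate partition $\lambda$ (so its Young diagram is invariant under reflection in the main diagonal), let $d$ be the number of cells on the main diagonal, i.e. the largest $i$ with $\lambda_i \geq i$. For each $i = 1, 2, \ldots, d$, define the $i$-th principal hook to consist of the diagonal cell $(i,i)$ together with the cells $(i, j)$ for $j > i$ in its row and the cells $(j, i)$ for $j > i$ in its column. Because $\lambda$ equals its conjugate, the arm length $\lambda_i - i$ equals the leg length $\lambda'_i - i$, so the $i$-th hook contains exactly $2(\lambda_i - i) + 1$ cells, an odd number. Moreover, since $\lambda_{i+1} - (i+1) < \lambda_i - i$, these hook lengths strictly decrease in $i$ and are hence distinct. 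I would send $\lambda$ to the partition whose parts are these $d$ hook lengths; the cells are merely regrouped, so the total weight is preserved, yielding a partition of $n$ into distinct odd parts.

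Next I would construct the inverse. Starting from a partition of $n$ into distinct odd parts $2a_1 + 1 > 2a_2 + 1 > \cdots > 2a_d + 1$ with $a_1 > a_2 > \cdots > a_d \geq 0$, I would build nested symmetric hooks: the $i$-th hook has its corner on the diagonal cell $(i,i)$ with arm and leg both of length $a_i$. The key verification is that these hooks assemble into a genuine Young diagram — the strict decrease $a_1 > a_2 > \cdots$ guarantees that successive rows (and by symmetry columns) are weakly decreasing with no internal gaps — and that the resulting diagram is symmetric by construction, hence self-conjugate. Checking that the two maps are mutually inverse is then routine, and equality of the two counting functions follows.

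The main obstacle is the inverse direction: one must confirm that folding the distinct odd parts into nested hooks always yields a legitimate partition rather than an arbitrary arrangement of cells, and that self-conjugacy is automatic. This compatibility hinges precisely on the strict monotonicity $a_1 > a_2 > \cdots > a_d$, which is exactly what distinctness of the odd parts supplies; once this point is settled, the theorem is complete.
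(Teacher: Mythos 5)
Your proposal is correct and is essentially the same as the paper's proof: the $i$-th principal hook of a self-conjugate partition has length $2(\lambda_i - i) + 1 = 2\lambda_i - (2i-1)$, so your hook decomposition is precisely the paper's map $\lambda \mapsto (2\lambda_1 - 1,\, 2\lambda_2 - 3,\, \ldots,\, 2\lambda_s - 2s + 1)$, merely described geometrically rather than by the arithmetic formula. The additional details you supply (weight preservation by regrouping cells, construction of the inverse, and the well-definedness check) simply flesh out what the paper states without verification.
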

There is a bijective proof which goes as follows (see \cite{av}).\\
Let $\lambda = (\lambda_1, \lambda_2, \ldots )$ be a self-conjugate partition of $n$. Then the mapping
$$ \lambda \mapsto (2\lambda_{1} - 1, 2\lambda_{2} - 3,\ldots, 2\lambda_{s} - 2s + 1),$$
where $s$ is the order of $\lambda$, is a bijection from the set of self-conjugate partitions of $n$ onto the set of partitions of $n$ into distinct odd parts. In the next section, we generalize Sylvester's theorem, Theorem \ref{self}.  
\section{Extending Sylvester's theorem}\label{sec2}
In this section we introduce a new class of partitions called symetric partitions and how they relate to certain types of partitions in the literature. However, we shall require a statistic called  the order of a partition (see \cite{av}).
\begin{definition}\label{order}
For a partition $\lambda = (\lambda_1, \lambda_2, \lambda_3, \ldots)$ where $\lambda_1 \geq \lambda_2 \geq \lambda_3 \geq \cdots$, we define the \textit{order} of $\lambda$ to be the integer $ \max\{i:\lambda_{i} \geq i\}$.
\end{definition}
 \noindent Our new class of partitions which we shall call ($\mu, \gamma$)-symmetric partitions is defined as follows.
\begin{definition}\label{mol1}
Let $\mu \geq 2$ and $\gamma \geq 0$ be integers. A partition $\lambda = (\lambda_1, \lambda_2, \lambda_3, \ldots)$ is said to be $(\mu, \gamma)$-symmetric if the tail $(\lambda_{s + 1}, \lambda_{s + 2}, \ldots )$ is equal to 
$$
\begin{cases} (s^{(\mu - 1)(\lambda_{s} - s) + \gamma},(s - 1)^{(\mu - 1)(\lambda_{s - 1} - \lambda_{s} + 1) - 1},\ldots ,1^{(\mu - 1)(\lambda_1 - \lambda_2 + 1) - 1}), &\text{if $s > 1$};\\
s^{(\mu - 1)(\lambda_{s} - s) + \gamma},& \text{if $s = 1$}.
\end{cases}
$$
where $s$ is the order of $\lambda$.
\end{definition}
\noindent We denote the number of $(\mu,\gamma)$-symmetric partitions of $n$ by $g(n, \mu,\gamma)$.
\begin{example}
The $(2,1)$-symmetric partitions of 10 are $(5,1^{5})$, $(4,2^{2}, 1^{2})$ and $(3^{2}, 2^{2})$. 
\end{example}
We declare the following result.
\begin{theorem}\label{Sem1}
For $n\geq 0$, $g(n,\mu,\gamma)$ is equal to the number of partitions of $n$ into distinct parts congruent to $1 + \gamma \pmod{\mu}$ and the smallest part is at least $\gamma + 1$.
\end{theorem}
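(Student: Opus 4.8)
My plan is to prove the theorem by exhibiting an explicit weight-preserving bijection that generalizes Sylvester's principal-hook map recalled before Theorem \ref{self}. The starting observation is that a $(\mu,\gamma)$-symmetric partition is completely determined by its \emph{head} $(\lambda_1, \lambda_2, \ldots, \lambda_s)$, where $s$ is its order: by Definition \ref{mol1} the entire tail is forced once the head is known. So I would first record the two facts that make the notion of a head self-consistent, namely that every head with $\lambda_1 \geq \cdots \geq \lambda_s \geq s$ produces legitimate (non-negative) tail multiplicities — indeed $(\mu-1)(\lambda_j - \lambda_{j+1} + 1) - 1 \geq \mu - 2 \geq 0$ and $(\mu-1)(\lambda_s - s) + \gamma \geq 0$ — and that the resulting partition has order exactly $s$, since its tail entries are all $\le s$ while $\lambda_s \geq s$. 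This reduces the problem to a bijection on heads.

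The map I propose is
\[
\lambda \;\longmapsto\; (h_1, h_2, \ldots, h_s), \qquad h_j := \mu(\lambda_j - j) + \gamma + 1,
\]
which for $\mu = 2$, $\gamma = 0$ is exactly Sylvester's hook $2\lambda_j - 2j + 1$. The three defining properties of the target set are immediate from this formula: each $h_j \equiv \gamma + 1 \pmod{\mu}$; the parts are distinct and decreasing because $h_j - h_{j+1} = \mu(\lambda_j - \lambda_{j+1} + 1) \geq \mu > 0$; and each $h_j \geq \gamma + 1$ because $\lambda_j \geq \lambda_s \geq s \geq j$ forces $\lambda_j - j \geq 0$. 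The inverse is the obvious one: given distinct parts $h_1 > \cdots > h_s$ with $h_j \equiv \gamma + 1 \pmod{\mu}$ and $h_s \geq \gamma + 1$, set $\lambda_j := j + (h_j - \gamma - 1)/\mu$; one checks $\lambda_j - \lambda_{j+1} = (h_j - h_{j+1})/\mu - 1 \geq 0$ and $\lambda_s \geq s$, so the $\lambda_j$ form a valid head, and rebuilding the tail by Definition \ref{mol1} returns a $(\mu,\gamma)$-symmetric partition. The two constructions are visibly mutually inverse.

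The one step requiring genuine computation — and the part I expect to be the main obstacle — is verifying that the map is weight-preserving, i.e. $\sum_{j=1}^s h_j = |\lambda|$. I would compute the tail weight
\[
T = s\big[(\mu-1)(\lambda_s - s) + \gamma\big] + \sum_{j=1}^{s-1} j\big[(\mu-1)(\lambda_j - \lambda_{j+1} + 1) - 1\big]
\]
using the Abel/telescoping identity $\sum_{j=1}^{s-1} j(\lambda_j - \lambda_{j+1}) = \sum_{j=1}^{s-1}\lambda_j - (s-1)\lambda_s$, which collapses $T$ to
\[
T = (\mu-1)\Big(\textstyle\sum_{j=1}^s \lambda_j - s^2 + \tfrac{s(s-1)}{2}\Big) + s\gamma - \tfrac{s(s-1)}{2}.
\]
Adding the head weight $\sum_{j=1}^s \lambda_j$ and simplifying (the quadratic terms in $s$ cancel against $\sum_{j=1}^s (2j-1) = s^2$ and $\sum_{j=1}^s (j-1) = \tfrac{s(s-1)}{2}$) yields $|\lambda| = \sum_{j=1}^s \big[\mu(\lambda_j - j) + \gamma + 1\big] = \sum_{j=1}^s h_j$, as required; the degenerate case $s = 1$ is covered automatically since the inner sum is then empty. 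Since the bijection also matches the order $s$ of a symmetric partition with the number of parts of its image, it establishes that $g(n,\mu,\gamma)$ equals the desired count for every $n \geq 0$, completing the proof.
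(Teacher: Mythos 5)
Your argument is correct, but it is not the argument the paper runs inside its proof of Theorem \ref{Sem1}: there the theorem is proved analytically, by summing $q^{\vert\lambda\vert}$ over heads $\lambda_1 \geq \cdots \geq \lambda_s \geq s$ to obtain the series \eqref{mugama} and then applying Euler's identity \eqref{q1} with $t = q^{\gamma+1}$ and $q$ replaced by $q^{\mu}$. What you propose is instead exactly the content of the paper's follow-up subsection ``A bijection for Theorem \ref{Sem1}'': your map $h_j = \mu(\lambda_j - j) + \gamma + 1$ is the paper's \eqref{bijection}, with the same distinctness check ($h_j - h_{j+1} \geq \mu$), essentially the same telescoping weight computation, and the same inverse $\lambda_j = j + (h_j - \gamma - 1)/\mu$; the paper simply presents this as a supplement rather than as the proof. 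Your write-up is in places more careful than that subsection: you verify that an arbitrary head with $\lambda_s \geq s$ produces non-negative tail multiplicities (using $\mu \geq 2$) and that the reconstructed partition has order exactly $s$, two points the paper leaves implicit and which are needed for the head-to-partition correspondence to be well defined. Comparing the two routes: the generating-function proof is shorter modulo the classical identity \eqref{q1} and yields \eqref{mugama} as a by-product, which the paper then reuses (via \eqref{inde1} and \eqref{inde2}) to prove Theorems \ref{s2} and \ref{s3}; your bijective proof is purely combinatorial, visibly generalizes Sylvester's map recalled after Theorem \ref{self}, and gives the sharper statement that the order of a $(\mu,\gamma)$-symmetric partition equals the number of parts of its image --- precisely the refinement that makes the split of $g(n,\mu,\gamma)$ into $g_{e}$ and $g_{o}$ combinatorially meaningful.
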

\begin{proof}
\begin{align*}
& \sum\limits_{n=0}^{\infty}g(n,\mu,\gamma)q^n  \\
& = \sum_{s\geq 0,}\sum_{\lambda_1 \geq \lambda_2 \geq \cdots \geq \lambda_s \geq s }q^{\mu \sum\limits_{i = 1}^{s}\lambda_i - \mu\frac{s(s + 1)}{2} + s\gamma + s} \\
& = \sum_{s\geq 0,}\sum_{n_s \geq 0,}\sum_{n_{s-1 \geq 0}}\cdots\sum_{n_1 \geq 0} \left( q^{\mu s^2} \cdot (q^{\mu s})^{n_s} \cdot (q^{\mu (s-1)})^{n_{s-1}} \cdots (q^{2\mu})^{n_2} \cdot (q^\mu)^{n_1}\right.\\
& \left.\quad \quad \quad \times q^{- \frac{\mu s(s+1)}{2} + s\gamma + s}\right)\\
& = \sum_{s=0}^{\infty} \frac{q^{\mu s^2- \frac{\mu s(s+1)}{2} + s\gamma + s}}{(1-q^{\mu s})(1- q^{\mu (s-1)})\ldots(1-q^{2\mu})(1-q^\mu)} \\
& = \sum_{s=0}^{\infty} \frac{q^{\frac{\mu s(s-1)}{2} + s (\gamma + 1)}}{(q^\mu ,q^\mu)_s}, \\
\end{align*}
\noindent i.e.
\begin{equation}\label{mugama}
\sum\limits_{n=0}^{\infty}g(n,\mu,\gamma)q^n \quad  = \quad \sum_{n=0}^{\infty} \frac{q^{\frac{\mu n(n-1)}{2} + n (\gamma + 1)}}{(q^\mu ,q^\mu)_n}.
\end{equation}

\noindent Replacing $t$ by $q^{\gamma +1}$ and $q$ by $q^\mu$ in \eqref{q1}, we get\\

$$\sum_{n=0}^{\infty} \frac{q^{\frac{\mu n(n-1)}{2} + n (\gamma + 1)}}{(q^\mu ,q^\mu)_n} = \prod_{n=0}^{\infty}(1 + q^{\mu n + (\gamma + 1)}).$$
\end{proof}

\noindent \textbf{Note}: The $(2,0)$-symmetric partitions are precisely the self-conjugate partitions. Observe that
a $(2,0)$-symmetric partition $\lambda$ is such that
$$ \lambda = (\lambda_1, \lambda_2, \lambda_3, \ldots, \lambda_{s - 1}, \lambda_s, s^{(\lambda_{s} - s)},(s - 1)^{(\lambda_{s - 1} - \lambda_{s})}, \ldots , 1^{(\lambda_1 - \lambda_2)}),$$
where $s$ is the order of $\lambda$  and $\lambda_{s}$ may be equal to $s$. If we conjugate, we get
\begin{align*}
& \lambda^{\prime}\\
 & = \left(( s+ \lambda_s - s + \sum_{i=1}^{s-1}(\lambda_i -\lambda_{i+1}))^{1},( s+ \lambda_s - s + \sum_{i=2}^{s-1}(\lambda_i -\lambda_{i+1})) ^{2-1},( s+ \lambda_s - s + \right.\\
& \quad \quad \quad \left.\sum_{i=3}^{s-1}(\lambda_i -\lambda_{i+1})) ^{3-2},\ldots,( s+ \lambda_s - s) ^{s-(s-1)}, (s)^{\lambda_s - s}, (s-1)^{\lambda_{s-1} - \lambda_s},\ldots, 1 ^{\lambda_1 -\lambda_2}\right) \\
& = (\lambda_1, \lambda_2, \lambda_3, \lambda_4, \ldots, \lambda_{s - 1}, \lambda_s, s^{(\lambda_{s} - s)}(s - 1)^{(\lambda_{s - 1} - \lambda_{s})},\ldots ,1^{(\lambda_1 - \lambda_2)})
\end{align*}
i.e. $\lambda = \lambda^{\prime}$.\\

\noindent On the other hand, those distinct parts congruent to $1 \pmod{2}$  are actually distinct odd parts. Indeed Theorem \ref{Sem1} is an extension of Sylvester's theorem, Theorem \ref{self}.

\subsection{A bijection for Theorem \ref{Sem1}}
\noindent Let $\lambda = (\lambda_1, \lambda_2, \ldots, \ldots )$ be a $(\mu, \gamma)$-symmetric partition of $n$. Then the mapping
\begin{equation}\label{bijection}
\lambda \mapsto \beta = \left(\mu(\lambda_1 - 1) + 1 + \gamma, \mu(\lambda_2 - 2) + 1 + \gamma, \ldots, \mu(\lambda_s - s) + 1 + \gamma\right)
\end{equation}
is an explicit bijection from the set of $(\mu,\gamma)$-symmetric partitions of $n$ onto the set of partitions of $n$ into distinct parts congruent to $1 + \gamma \pmod{\mu}$ and the smallest part is $\geq \gamma + 1$.
Firstly, observe that the parts of $\beta$ are distinct since
$$\beta_{i} - \beta_{i + 1} = \mu(\lambda_i - i) + 1 + \gamma - (\mu(\lambda_{i + 1} - i - 1) + 1 + \gamma) = \mu(\lambda_{i} - \lambda_{i + 1}) + \mu \geq \mu > 1$$
so that $\beta_{i} > \beta_{i+1}$ for all $i$.\\
Secondly, we claim that the map preserves weight since 
\begin{align*}
\vert \beta \vert & = \sum\limits_{i = 1}^{s}\left( \mu(\lambda_i - i) + \gamma + 1\right)\\
& = \mu\sum\limits_{i = 1}^{s}(\lambda_i - i) + \sum\limits_{i = 1}^{s}(\gamma + 1) \\
& = \mu\sum\limits_{i = 1}^{s}\lambda_i\,\, - \mu\frac{s(s + 1)}{2} + s\gamma + s \\
& = \mu\sum\limits_{i = 1}^{s}\lambda_i\,\, - \,\,(\mu - 1)s^2  + s\gamma + \mu\frac{s(s - 1)}{2} - s(s - 1) \end{align*}
\begin{align*}
& =\sum\limits_{i = 1}^{s}\lambda_i  + (\mu - 1)\left( s\lambda_{s} - s^2  + \sum_{j = 1}^{s}\lambda_{j} - s\lambda_s\right)   + s\gamma + (\mu - 2)\frac{s(s - 1)}{2}\\
& = \sum\limits_{i = 1}^{s}\lambda_i + (\mu - 1)(s\lambda_{s} - s^2) + s\gamma + (\mu - 1)\left(\sum_{j = 1}^{s - 1}j(\lambda_{j} - \lambda_{j + 1})\right) + (\mu - 2)\sum_{j = 1}^{s - 1}j\\
& = \sum\limits_{i = 1}^{s}\lambda_i + s((\mu - 1)(\lambda_{s} - s) + \gamma) +   \sum\limits_{i = 1}^{s - 1}j(  (\mu - 1)(\lambda_{j} - \lambda_{ j + 1} + 1) - 1)  \\
& = \vert \lambda\vert.
\end{align*}
\begin{remark} The explicit bijection given above generalises the explicit bijection given for Theorem \ref{self}.
\end{remark}
The inverse of the map above is described as follows. Let $(\beta_1,\beta_{2}, \ldots, \beta_{\ell})$ be a partition of $n$ into distinct parts congruent to $\gamma + 1$ modulo $\mu$ and each part is at least $\gamma + 1$. For $1\leq i\leq \ell$, let
$$ \lambda_{i} =   i  + \frac{\beta_i - 1 - \gamma}{\mu}.$$
Then the partition
 $(\lambda_1, \lambda_2, \lambda_3, \ldots, \lambda_{\ell},\lambda_{\ell + 1}, \lambda_{\ell + 2}, \ldots )$  where

$(\lambda_{\ell + 1}, \lambda_{\ell + 2}, \ldots ) $ defined as:
$$
\begin{cases} (\ell^{(\mu - 1)(\lambda_{\ell} - \ell) + \gamma},(\ell - 1)^{(\mu - 1)(\lambda_{\ell - 1} - \lambda_{\ell} + 1) - 1},\ldots ,1^{(\mu - 1)(\lambda_1 - \lambda_2 + 1) - 1}), &\text{ if $\ell > 1$};\\
\ell^{(\mu - 1)(\lambda_{\ell} - \ell) + \gamma},&\text{if $\ell = 1$}
\end{cases}
$$
 is a $(\mu,\gamma)$-symmetric partition of $n$.
\begin{example}
Consider the case $n = 10$, $\mu = 2$ and $\gamma = 1$.
\end{example}
\noindent The following displays the one-to-one correspondence for this case.
\begin{table}[h!]
    \label{tab:table1}
    \begin{tabular}{lcr}
       partitions into distinct even parts & $\mapsto$ & $(2,1)$-symmetric partitions\\
      \hline
      $(10)$ & $\mapsto $ &  $(5,1^{5})$\\
      $(8,2)$ & $\mapsto$ & $(4,2^2,1^2)$ \\
      $(6,4)$ & $\mapsto$ & $(3^2, 2^2)$\\
      \hline
    \end{tabular}
    \end{table}

\subsection{New combinatorial interpretation of some Rogers-Ramanujan Identities}
In this section, we interpret the following Rogers-Ramanujan identities via symmetric partitions:
\begin{equation}\label{slat1}
\sum\limits_{n=0}^{\infty} \frac{q^{2n^2}}{(q;q)_{2n}} = \prod\limits_{n=1}^{\infty} \frac{(1-q^{8n-1})(1-q^{8n-7})(1-q^{16n-10})(1-q^{16n-6})(1-q^{8n})}{1-q^{n}},
\end{equation}
\begin{equation}\label{slat2}
\sum\limits_{n=0}^{\infty} \frac{q^{2n(n+1)}}{(q;q)_{2n+1}} = \prod\limits_{n=1}^{\infty} \frac{(1-q^{8n-3})(1-q^{8n-5})(1-q^{16n-14})(1-q^{16n-2})(1-q^{8n})}{1-q^n}.
\end{equation}
\noindent The two identities are part of Slater's compendium (see \cite{slater}) and were given an interpretation in terms of $n$-color partitions by A. K. Agarwal (see \cite{c2}).\\
However, before we proceed, it is worth mentioning that the number of $(\mu,\gamma)$-symmetric partitions $g(n,\mu,\gamma)$ is actually the sum of two functions:
$$g(n,\mu,\gamma) = g_{e}(n,\mu,\gamma)\,\, + \, \,g_{o}(n,\mu,\gamma)$$ where $g_{e}(n,\mu,\gamma)$ (resp. $g_{o}(n,\mu,\gamma)$) denotes the number of $g(n,\mu,\gamma)$-partitions with an even (resp. odd) order. Hence, it is clear from \eqref{mugama} that
 \begin{align}\label{inde1}
 \sum\limits_{n=0}^{\infty}g_{e}(n,\mu,\gamma)q^n & = \sum_{j=0}^{\infty} \frac{q^{\frac{\mu 2j(2j-1)}{2} + 2j (\gamma + 1)}}{(q^\mu ,q^\mu)_{2j}} \nonumber\\
                                                 & = \sum_{j=0}^{\infty} \frac{q^{\mu j(2j-1) + 2j (\gamma + 1)}}{(q^\mu ,q^\mu)_{2j}} 
\end{align}
and
\begin{align}\label{inde2}
\sum\limits_{n=0}^{\infty}g_{o}(n,\mu,\gamma)q^n & = \sum_{j=0}^{\infty} \frac{q^{\frac{\mu (2j + 1)(2j)}{2} + (2j + 1)(\gamma + 1)}}{(q^\mu ,q^\mu)_{2j + 1}} \nonumber\\
                                                &  = \sum_{j=0}^{\infty} \frac{  q^{\mu (2j + 1)j + (2j + 1)(\gamma + 1) } }{(q^\mu ,q^\mu)_{2j + 1}}.
\end{align}
The following theorems give new partition-theoretic interpretation of \eqref{slat1} and \eqref{slat2}, respectively.
\begin{theorem}\label{s2}
Let $\alpha \equiv 0\pmod{2}$ be a positive integer. Then $g_{e}(\alpha n,\alpha,\frac{\alpha}{2} - 1)$ is equal to the number of partitions of $n$ into parts not congruent to $ 0, \pm 1, \pm 6, \pm 7, 8 \pmod{16}$. 
\end{theorem}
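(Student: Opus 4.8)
The plan is to begin from the generating function \eqref{inde1} for $g_e(n,\mu,\gamma)$, specialize the parameters to $\mu = \alpha$ and $\gamma = \tfrac{\alpha}{2}-1$ (note $\alpha$ even and positive forces $\mu = \alpha \geq 2$ and $\gamma \geq 0$, so these are admissible), and then recognize the left-hand side of \eqref{slat1}. Substituting into \eqref{inde1}, the exponent in the numerator collapses neatly:
\[
\alpha j(2j-1) + 2j\left(\tfrac{\alpha}{2}-1+1\right) = \alpha j(2j-1) + \alpha j = 2\alpha j^2,
\]
so that
\[
\sum_{n=0}^{\infty} g_e(n,\alpha,\tfrac{\alpha}{2}-1)\, q^n = \sum_{j=0}^{\infty} \frac{q^{2\alpha j^2}}{(q^\alpha,q^\alpha)_{2j}}.
\]

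The crucial observation is that the right-hand side is a power series in $q^\alpha$ alone, since both the numerator $q^{2\alpha j^2}$ and the denominator $(q^\alpha,q^\alpha)_{2j}$ involve only powers of $q^\alpha$. Hence every nonzero coefficient sits at an exponent divisible by $\alpha$, and extracting the coefficients at multiples of $\alpha$ amounts to replacing $q^\alpha$ by a fresh variable $Q$. This yields
\[
\sum_{n=0}^{\infty} g_e(\alpha n,\alpha,\tfrac{\alpha}{2}-1)\, Q^n = \sum_{j=0}^{\infty} \frac{Q^{2j^2}}{(Q,Q)_{2j}},
\]
which is exactly the left-hand side of \eqref{slat1}. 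Invoking that identity rewrites the series as the infinite product
\[
\prod_{n=1}^{\infty} \frac{(1-Q^{8n-1})(1-Q^{8n-7})(1-Q^{16n-10})(1-Q^{16n-6})(1-Q^{8n})}{1-Q^n}.
\]

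The remaining task, which is where most care is needed, is to interpret this product combinatorially. The denominator $\prod (1-Q^n)^{-1}$ generates all partitions, while each numerator factor deletes one arithmetic progression of part sizes. Reducing exponents modulo $16$, the factors contribute the residue classes $\{7,15\}$ (from $8n-1$), $\{1,9\}$ (from $8n-7$), $\{6\}$ (from $16n-10$), $\{10\}$ (from $16n-6$), and $\{8,0\}$ (from $8n$). Together these are precisely $0,\pm1,\pm6,\pm7,8 \pmod{16}$, and I would verify that each of these eight classes is produced exactly once and over its full range of positive exponents, so that the numerator equals $\prod_{k}(1-Q^k)$ taken over exactly the excluded parts. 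After cancelling these factors against the denominator, one is left with
\[
\prod_{\substack{k\geq 1 \\ k \not\equiv 0,\pm1,\pm6,\pm7,8 \ (16)}} \frac{1}{1-Q^k},
\]
the generating function for partitions of $n$ into parts not congruent to $0,\pm1,\pm6,\pm7,8 \pmod{16}$. Comparing coefficients of $Q^n$ then gives the theorem. The main obstacle is the residue bookkeeping in this last step: one must confirm that the five numerator factors cover the eight excluded residue classes with no omission, overlap, or repetition, so that the cancellation is exact.
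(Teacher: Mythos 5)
Your proposal follows the paper's proof essentially verbatim: specialize \eqref{inde1} at $\mu=\alpha$, $\gamma=\frac{\alpha}{2}-1$, collapse the numerator exponent to $2\alpha j^{2}$, apply \eqref{slat1} with $q^{\alpha}$ in place of $q$, and extract coefficients at multiples of $\alpha$ (the paper phrases this as setting $q:=q^{1/\alpha}$). The only difference is that you carry out the modulo-$16$ residue bookkeeping for the product explicitly, and correctly, whereas the paper stops at the infinite product and leaves that combinatorial interpretation implicit.
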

\begin{proof}
\noindent Replacing $\mu$ by $\alpha$, $\gamma$ by $\frac{\alpha}{2}-1$  in \eqref{inde1}, we have
\begin{align*}
& \sum\limits_{n=0}^{\infty}g_{e}(n,\alpha,\frac{\alpha}{2}-1)q^n \\
& =  \sum_{j=0}^{\infty} \frac{q^{{\alpha j(2j-1)} + j\alpha}}{(q^\alpha;q^\alpha)_{2j}} \\
& = \sum_{j=0}^{\infty} \frac{q^{2\alpha j^2}}{(q^\alpha;q^\alpha)_{2j}} \\
& = \prod_{n=1}^{\infty} \frac{(1-q^{\alpha(8n-1)})(1-q^{\alpha(8n-7)})(1-q^{\alpha(16n-10)})(1-q^{\alpha(16n-6)})(1-q^{8\alpha n})}{(1-q^{\alpha n})}
\end{align*}
\noindent Setting $q: = q^{\frac{1}{\alpha}}$ yields
$$ \sum\limits_{n=0}^{\infty}g_{e}(\alpha n,\alpha,\frac{\alpha}{2} -1)q^n = \prod_{n=1}^{\infty} \frac{(1-q^{8n-1})(1-q^{8n-7})(1-q^{16n-10})(1-q^{16n-6})(1-q^{8n})}{(1-q^n)}.$$
\end{proof}

\begin{example} 
Consider $\alpha = 2$ and $n = 6$.
\end{example}
\noindent The $(2,0)$-symmetric partitions of 12 with an even order are
$$(6,2,1,1,1,1), (5,3,2,1,1), (4,4,2,2)$$
\noindent and partitions of 6 whose parts are not congruent to $ 0, \pm 1, \pm 6, \pm 7, 8 \pmod{16}$ are
$$(4,2), (3,3), (2,2,2).$$
In either case, the number is the same.
\begin{theorem}\label{s3}
Let $\alpha \equiv 0\pmod{2}$ be a positive integer. Then $g_{o}( \alpha n + \frac{\alpha}{2}, \alpha, \frac{\alpha}{2} -1)$ is equal to the number of partitions of $n$ into parts not congruent to $ 0, \pm 2, \pm 3, \pm 5, 8 \pmod{16}$. 
\end{theorem}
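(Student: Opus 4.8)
The plan is to follow the proof of Theorem \ref{s2} almost verbatim, replacing the even-order generating function \eqref{inde1} by the odd-order one \eqref{inde2} and the identity \eqref{slat1} by \eqref{slat2}. First I would put $\mu = \alpha$ and $\gamma = \frac{\alpha}{2}-1$ in \eqref{inde2}. The exponent then collapses:
\[
\alpha(2j+1)j + (2j+1)\left(\frac{\alpha}{2}\right) = \frac{\alpha(2j+1)^2}{2} = 2\alpha j(j+1) + \frac{\alpha}{2},
\]
which is an integer since $\alpha$ is even. Factoring out the constant term gives
\[
\sum_{n=0}^{\infty} g_o\left(n, \alpha, \frac{\alpha}{2}-1\right) q^n = q^{\alpha/2}\sum_{j=0}^{\infty}\frac{q^{2\alpha j(j+1)}}{(q^\alpha;q^\alpha)_{2j+1}}.
\]

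The key step is to recognise the remaining sum as the left-hand side of \eqref{slat2} with $q$ replaced by $q^\alpha$. Invoking \eqref{slat2} yields
\[
\sum_{n=0}^{\infty} g_o\left(n,\alpha,\frac{\alpha}{2}-1\right)q^n = q^{\alpha/2}\prod_{n=1}^{\infty}\frac{(1-q^{\alpha(8n-3)})(1-q^{\alpha(8n-5)})(1-q^{\alpha(16n-14)})(1-q^{\alpha(16n-2)})(1-q^{8\alpha n})}{1-q^{\alpha n}}.
\]
Because the product is a power series in $q^\alpha$, every exponent on the right is congruent to $\frac{\alpha}{2}$ modulo $\alpha$; hence $g_o(m,\alpha,\frac{\alpha}{2}-1)$ vanishes unless $m = \alpha n + \frac{\alpha}{2}$. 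Extracting precisely these coefficients and then setting $q := q^{1/\alpha}$ (equivalently, dividing by $q^{\alpha/2}$ and replacing $q^\alpha$ by $q$) produces
\[
\sum_{n=0}^{\infty} g_o\left(\alpha n + \frac{\alpha}{2},\alpha,\frac{\alpha}{2}-1\right)q^n = \prod_{n=1}^{\infty}\frac{(1-q^{8n-3})(1-q^{8n-5})(1-q^{16n-14})(1-q^{16n-2})(1-q^{8n})}{1-q^n}.
\]

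It then remains to read off the combinatorial content of this product. Writing it as $\frac{1}{(q;q)_\infty}$ times the numerator, the factor $\frac{1}{(q;q)_\infty}$ generates all partitions while each numerator factor deletes one residue class of allowed part sizes modulo $16$: $(1-q^{8n-3})$ removes $5$ and $-3$, $(1-q^{8n-5})$ removes $3$ and $-5$, $(1-q^{16n-14})$ removes $2$, $(1-q^{16n-2})$ removes $-2$, and $(1-q^{8n})$ removes $0$ and $8$. The surviving parts are exactly those not congruent to $0, \pm 2, \pm 3, \pm 5, 8 \pmod{16}$, which is the assertion. The only point where the argument departs from the even-order case is the surplus factor $q^{\alpha/2}$: it is what forces the shift by $\frac{\alpha}{2}$ in the first argument of $g_o$, and isolating the coefficients on the correct arithmetic progression is the one step requiring care. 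Beyond that I expect no real obstacle, the residue bookkeeping being routine.
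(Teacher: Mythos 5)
Your proposal is correct and follows essentially the same route as the paper's own proof: substitute $\mu=\alpha$, $\gamma=\frac{\alpha}{2}-1$ into \eqref{inde2}, recognize the resulting sum as the left side of \eqref{slat2} at base $q^\alpha$, and rescale $q\mapsto q^{1/\alpha}$ after accounting for the shift $\frac{\alpha}{2}$. If anything, you are slightly more thorough than the paper, which omits both the observation that $g_o\left(m,\alpha,\frac{\alpha}{2}-1\right)$ vanishes off the progression $m\equiv\frac{\alpha}{2}\pmod{\alpha}$ and the final residue-class bookkeeping modulo $16$.
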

\begin{proof}
The proof is similar to the proof of Theorem \ref{s2}. Set $\mu: = \alpha$, $\gamma:= \frac{\alpha}{2}-1$  in \eqref{inde2} so that
\begin{align*}
\sum\limits_{n = 0}^{\infty}g_{o}(n, \alpha, \frac{\alpha}{2} -1)q^{n} & = \sum_{j=0}^{\infty} \frac{  q^{\alpha (2j + 1)j + (2j + 1)(\frac{\alpha}{2}) } }{(q^\alpha ,q^\alpha)_{2j + 1}} \\
                               & = \sum_{j=0}^{\infty} \frac{  q^{2\alpha j (j + 1) + \frac{\alpha}{2}} }{(q^\alpha ,q^\alpha)_{2j + 1}}.
\end{align*}
Thus
\begin{align*}
& \sum\limits_{n = 0 }^{\infty}g_{o}(n + \frac{\alpha}{2}, \alpha, \frac{\alpha}{2} -1)q^{n} \\
& = \sum_{j=0}^{\infty} \frac{  q^{2\alpha j (j + 1)} }{(q^\alpha ,q^\alpha)_{2j + 1}} \\
                            & = \prod_{n=1}^{\infty} \frac{(1-q^{\alpha (8n-3)})(1-q^{\alpha(8n-5)})(1-q^{\alpha(16n-14)})(1-q^{\alpha(16n-2)})(1-q^{8\alpha n})}{1-q^{\alpha n}}
\end{align*}
which yields
\begin{align*}
 &  \sum\limits_{n = 0}^{\infty}g_{o}(\alpha n + \frac{\alpha}{2}, \alpha, \frac{\alpha}{2} -1)q^{n} \\
&   = \prod_{n=1}^{\infty} \frac{(1-q^{8n-3})(1-q^{8n-5})(1-q^{16n-14})(1-q^{16n-2})(1-q^{8n})}{1-q^n}.
\end{align*}
\end{proof}
\section*{Declarations}
\begin{itemize}
\item Funding: No funding was received for conducting this study. The authors have no financial interests in any material discussed in this article.
\item Conflict of interest: On behalf of all authors, the corresponding author states that there is no conflict of interest.
\item Authors' contributions: The authors contributed equally to this work.
\item Data availability: The authors can confirm that this manuscript has no associated data
\end{itemize}

\end{document}